\theoremstyle{plain}
\newtheorem*{thm*}{Theorem}
\newtheorem{thm}{Theorem}
\newtheorem{lem}[thm]{Lemma}
\theoremstyle{definition}
\newtheorem{defn}[thm]{Definition}
\newcommand{\id}{\mathrm{id}}
\newcommand{\e}{\varepsilon}
\newcommand{\0}{\emptyset}
\newcommand{\sm}{\setminus}
\newcommand{\ol}{\overline}
\begin{document}

\title[Hereditarily equivalent plane continua]{A complete classification of\\hereditarily equivalent plane continua}
\author{L. C. Hoehn \and L. G. Oversteegen}
\date{\today}

\address[L.\ C.\ Hoehn]{Nipissing University, Department of Computer Science \& Mathematics, 100 College Drive, Box 5002, North Bay, Ontario, Canada, P1B 8L7}
\email{loganh@nipissingu.ca}

\address[L.\ G.\ Oversteegen]{University of Alabama at Birmingham, Department of Mathematics, Birmingham, AL 35294, USA}
\email{overstee@uab.edu}

\thanks{The first named author was partially supported by NSERC grant RGPIN 435518}
\thanks{The second named author was partially supported by NSF-DMS-1807558}

\subjclass[2010]{Primary 57N05; Secondary 54F15, 54F65}
\keywords{plane continua, hereditarily equivalent, pseudo-arc, hereditarily indecomposable}

\begin{abstract}
A continuum is hereditarily equivalent if it is homeomorphic to each of its non-degenerate sub-continua.  We show in this paper that the arc and the pseudo-arc are the only non-degenerate hereditarily equivalent plane continua.
\end{abstract}

\maketitle

\section{Introduction}
\label{sec:intro}

By a \emph{continuum}, we mean a compact connected metric space.  A continuum is \emph{non-degenerate} if it contains more than one point.  We refer to the space $\mathbb{R}^2$, with the Euclidean topology, as \emph{the plane}.  The Euclidean distance between two points $x,y$ in $\mathbb{R}^2$ (or $\mathbb{R}^3$) will be denoted $\|x - y\|$.  An \emph{arc} is a space which is homeomorphic to the interval $[0,1]$.  By a \emph{map} we mean a continuous function.

A continuum $X$ is \emph{hereditarily equivalent} if it is homeomorphic to each of its non-degenerate subcontinua.  This concept was introduced by Mazurkiewicz, who was interested in topological characterizations of the arc.  In the second volume of Fundamenta Mathematicae in 1921, Mazurkiewicz \cite{mazurkiewicz21} asked (Probl\`{e}me 14) whether the arc is the only non-degenerate hereditarily equivalent continuum.

A continuum $X$ is \emph{decomposable} if it is the union of two proper subcontinua, and \emph{indecomposable} otherwise.  $X$ is \emph{hereditarily indecomposable} if every subcontinuum of $X$ is indecomposable.  $X$ is \emph{arc-like} (respectively, \emph{tree-like}) if for every $\e > 0$ there exists an $\e$-map from $X$ to $[0,1]$ (respectively, to a tree), where $f: X \to Y$ is an \emph{$\e$-map} if for each $y \in Y$ the preimage $f^{-1}(y)$ has diameter less than $\e$.  Henderson \cite{henderson60} showed that the arc is the only decomposable hereditarily equivalent continuum.  Cook \cite{cook68} has shown that every hereditarily equivalent continuum is tree-like.

Probl\`{e}me 14 of Mazurkiewicz was formally answered by Moise \cite{moise48} in 1948, who constructed another hereditarily equivalent plane continuum which he called the ``pseudo-arc'', due to this property it has in common with the arc.  The pseudo-arc is a one-dimensional fractal-like hereditarily indecomposable arc-like continuum.  Such a space was constructed by Knaster \cite{knaster22} in 1922, and another by Bing \cite{bing48} in 1948 which he proved was topologically homogeneous.  Bing \cite{bing51} proved in 1951 that the pseudo-arc is the only hereditarily indecomposable arc-like continuum.  From this characterization it follows that the spaces of Knaster, Moise, and Bing are all homeomorphic, and also it can immediately be seen that the pseudo-arc is hereditarily equivalent.

Since Moise's article, the question has been: What are all hereditarily equivalent continua?  The main result of this paper is:
\begin{thm}
\label{thm:main1}
If $X$ is a non-degenerate hereditarily equivalent plane continuum, then $X$ is homeomorphic to the arc or to the pseudo-arc.
\end{thm}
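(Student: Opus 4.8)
The plan is to split into two cases according to whether $X$ is decomposable. If $X$ is decomposable, then by Henderson's theorem \cite{henderson60}, which asserts that the arc is the only decomposable hereditarily equivalent continuum, $X$ is an arc and we are finished; note that this case uses no planarity. So assume $X$ is indecomposable. The first observation is that $X$ is then \emph{hereditarily} indecomposable: if $Y \subseteq X$ is any non-degenerate subcontinuum, then hereditary equivalence gives $Y \cong X$, and since $X$ is indecomposable so is $Y$; as $Y$ was arbitrary, every non-degenerate subcontinuum of $X$ is indecomposable. By Cook's theorem \cite{cook68}, $X$ is also tree-like. Thus the entire theorem reduces to a single assertion: a hereditarily indecomposable, hereditarily equivalent plane continuum is arc-like. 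Once arc-likeness is established, Bing's characterization \cite{bing51} of the pseudo-arc as the unique hereditarily indecomposable arc-like continuum immediately identifies $X$ as the pseudo-arc.

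To prove arc-likeness I would exploit hereditary equivalence as an extreme form of self-similarity: every non-degenerate subcontinuum of $X$, at every scale, is a homeomorphic copy of $X$ itself. The goal is to produce, for each $\e > 0$, an $\e$-map $X \to [0,1]$. Beginning with the $\e$-maps $X \to T$ onto trees supplied by tree-likeness, the task is to eliminate the branch vertices of $T$ while keeping fibers small. I would attempt to measure the ``essential branching'' of such an approximation and argue that any branching which cannot be removed is witnessed by a subcontinuum of $X$ that is branched at that scale. Since that subcontinuum is again homeomorphic to $X$, the same branching must reappear inside it, and recursively inside arbitrarily small subcontinua — so that, were it present at all, branching of comparable complexity would occur simultaneously at every scale.

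The main obstacle is to convert this self-similar, merely \emph{approximate} branching into a genuine contradiction, and this is precisely where planarity must enter in an essential way. The statement is false without it: there are hereditarily indecomposable tree-like continua that are not arc-like, so no purely intrinsic invariant can suffice, and the obstructions naturally available here — atriodicity, which $X$ possesses automatically as a hereditarily indecomposable continuum, and span zero — are known to be too weak to force arc-likeness. The difficulty is sharpened by the fact that, being atriodic, $X$ contains no actual triod, so the branching detected by tree-maps never materializes as an embedded triod to which a classical planar constraint (such as R.~L.~Moore's theorem that no uncountable family of pairwise disjoint triods embeds in the plane) could be applied directly. My plan would therefore be to read the branching off the \emph{complement} instead: since a tree-like continuum does not separate the plane, $S^2 \sm X$ is a single domain, and I would study its Carath\'eodory prime-end compactification, aiming to show that genuine self-similar branching in $X$ would force prime-end and accessibility behaviour incompatible with that of a hereditarily equivalent planar continuum, thereby forcing the branching to vanish and $X$ to be arc-like. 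Making this link between approximate branching and the prime-end structure of the complement precise is, I expect, the technical heart of the proof and the step most likely to resist a routine argument.
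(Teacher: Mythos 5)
Your reduction is correct and matches the paper's: Henderson \cite{henderson60} disposes of the decomposable case, an indecomposable hereditarily equivalent continuum is hereditarily indecomposable, and by Bing \cite{bing51} it then suffices to show $X$ is arc-like (equivalently, to produce an $\e$-map to an arc for each $\e>0$). But everything after that reduction is a research plan rather than a proof, and the plan points in a direction the paper does not take and that you yourself flag as uncertain. Concretely, you never establish arc-likeness: the proposed mechanism --- reading ``self-similar approximate branching'' off the prime-end structure of the complementary domain of $X$ and deriving a contradiction with hereditary equivalence --- is not developed to the point where one could check any step, and it is not clear how approximate branching at a scale $\e$ (which, as you note, never materializes as an embedded triod) would be reflected in accessibility or prime-end data at all. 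This is the entire content of the theorem in the indecomposable case, so the gap is not a detail but the whole argument.

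For comparison, the paper enters planarity at a different point and with different tools. First, it invokes a theorem of Oversteegen and Tymchatyn (Lemma \ref{lem:strip}, from \cite{OT84a}): a non-degenerate indecomposable hereditarily equivalent plane continuum has a non-degenerate subcontinuum $Y$ lying, for every $\e>0$, in an $\e$-strip with disjoint ends; the planar input there is that $X$ contains uncountably many disjoint copies of itself, hence a copy of $X\times C$ embedded in the plane. Since $Y\cong X$ by hereditary equivalence, $X$ itself may be assumed to lie in arbitrarily small strips. Second, the paper proves a new characterization of the pseudo-arc (Theorem \ref{thm:charsep}): a hereditarily indecomposable continuum $X$ is a pseudo-arc provided that for each $\e>0$ there are a graph approximation $f:X\to G$ and an arc $I$ such that the $\e$-neighborhood set $U\subset G\times I$ separates the two ends of $G\times I$. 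The strip supplies exactly such a separator (Lemma \ref{lem:separate}), and the lifting theorem for hereditarily indecomposable continua (Theorem \ref{thm:lift}, refining \cite{HO14}) lifts $f$ through $U$ to produce the desired $\e$-map $\pi_2\circ h:X\to I$. Note also that the paper does not use Cook's tree-likeness theorem in the proof at all. If you want to salvage your approach, the missing ingredient is precisely some theorem converting planarity plus hereditary equivalence into a quantitative ``thinness'' statement like Lemma \ref{lem:strip}; without it, the prime-end route remains a conjecture.
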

It remains an open question whether there exists any other hereditarily equivalent continuum in $\mathbb{R}^3$.

As part of the sequel we will also give a new characterization (Theorem \ref{thm:charsep}) of the pseudo-arc.

\section{Plane strips}
\label{sec:strips}

If a continuum admits an $\e$-map to an arc then it can be covered by a chain of open sets whose diameters are less than $\e$ (i.e.\ a set that roughly looks like a tube of small diameter).  The notion of an $\e$-strip (see Definition \ref{defn:strip} below), introduced in \cite{OT82} in a slightly different form, conveys a similar feeling.  However, it was observed in \cite[Figure 1]{OT82} that, for arbitrarily small $\e > 0$, there exists an $\e$-strip which does not admit a $1$-map to an arc.  Nevertheless we show in this paper (Theorem \ref{thm:hered indec strip} below) that if a hereditarily indecomposable plane continuum is contained in an $\e$-strip for arbitrarily small $\e > 0$, then it must in fact be homeomorphic to the pseudo-arc.

Given two points $x,y$ in the plane $\mathbb{R}^2$ we denote by $\ol{xy}$ the straight line segment joining them.  Given points $v_1,\ldots,v_n$ in $\mathbb{R}^2$, the \emph{polygonal arc} $A$ with vertices $v_1,\ldots,v_n$ is the union of the straight line segments $\ol{v_1v_2}, \ldots, \ol{v_{n-1}v_n}$.  Denote the \emph{vertex set} of $A$ by $V_A = \{v_1,\ldots,v_n\}$.  If $v_n = v_1$, then we call $A$ a \emph{polygonal closed curve}.  We will need the following lemma which was proved in \cite{OT82}.

\begin{lem}[\cite{OT82}, Lemma 2.1]
Let $T$ be a polygonal closed curve in $\mathbb{R}^2$ with vertex set $V_T$.  Given any $z \in \mathbb{R}^2 \sm T$ we say $z$ is \emph{odd} (respectively, \emph{even}) with respect to $T$ if there exists a polygonal arc $A$ with vertex set $V_A$ from $z$ to a point in the unbounded component of $\mathbb{R}^2 \sm T$ so that $A \cap V_T = \0 = T \cap V_A$ and $|T \cap A|$ is odd (respectively, even).  Then this notion of odd/even is well-defined, i.e.\ independent of the choice of $A$.
\end{lem}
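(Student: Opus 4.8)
The plan is to fix $z$ and two polygonal arcs $A,A'$ satisfying the stated genericity conditions, each running from $z$ to a point in the unbounded component $U$ of $\mathbb{R}^2 \sm T$, and to prove directly that $|T \cap A| \equiv |T \cap A'| \pmod 2$. First I would join the terminal points of $A$ and $A'$ by a polygonal arc $B$ lying entirely in $U$, which is possible because $U$ is an open connected subset of the plane and hence polygonally arcwise connected. Since $B \subseteq U$ we have $B \cap T = \0$, so $B$ meets neither $T$ nor $V_T$, and the vertices of $B$ (lying in $U$) avoid $T$ as well. After a small perturbation of $A'$ relative to its endpoints — which changes neither $|T \cap A'|$ nor its parity — I may assume the crossing sets $A \cap T$ and $A' \cap T$ are disjoint. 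Concatenating $A$, then $B$, then the reverse of $A'$ produces a polygonal closed curve (loop) $L$ satisfying $L \cap V_T = \0 = T \cap V_L$, for which $|T \cap L| = |T \cap A| + |T \cap A'|$. Thus it suffices to prove the following \textbf{Key Claim:} every polygonal closed curve $L$ with $L \cap V_T = \0 = T \cap V_L$ meets $T$ in an even number of points.

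The conditions $L \cap V_T = \0 = T \cap V_L$ guarantee that every point of $L \cap T$ is an interior point of an edge of $L$ and of an edge of $T$; after one further generic perturbation I may assume that no two such crossings coincide and that at each crossing the two edges are transverse, so that $L$ genuinely passes from one side of the edge of $T$ to the other there. Hence $L \cap T$ is a finite set of transversal crossings, and the parity of its cardinality is the quantity I must control.

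The crux is an even-crossing argument via a checkerboard coloring of the complement. Because $T$ is a \emph{closed} curve, it is a mod-$2$ cycle: every point of $T$, including every vertex, is locally the endpoint of an even number of edge-germs of $T$ (a vertex appearing $k$ times in the cyclic edge sequence carries $2k$ edge-ends). This even-degree property is precisely what allows the finitely many components of $\mathbb{R}^2 \sm T$ to be assigned colors in $\{0,1\}$, with the unbounded component colored $0$, so that any two components abutting along an edge of $T$ receive opposite colors. Granting this coloring, I track the color of the component containing the moving point as it traverses $L$: the color is locally constant and switches precisely at each transversal crossing of $T$. Since $L$ is a loop, the point returns to its starting component and the color returns to its initial value, so the number of switches — namely $|T \cap L|$ — is even.

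The main obstacle is establishing the checkerboard coloring when $T$ is allowed to be self-intersecting, since at a vertex where several edges of $T$ meet one must check that the local arrangement is compatible with a consistent global $2$-coloring; this is exactly where the cycle (even-degree) hypothesis is used. I would handle it either by a direct Euler-type induction on the faces of $\mathbb{R}^2 \sm T$, or by reducing to the simple case: write $T$ as a symmetric difference of finitely many simple closed polygonal curves $T_1,\dots,T_r$, apply the Jordan curve theorem to each $T_i$ (a loop crosses a simple closed curve an even number of times, since it must return to its side), and sum the parities modulo $2$. Either route yields the Key Claim, whence $|T \cap A| + |T \cap A'| = |T \cap L| \equiv 0 \pmod 2$, so $|T \cap A| \equiv |T \cap A'| \pmod 2$, which is the asserted well-definedness of the odd/even labeling.
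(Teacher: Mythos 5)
The paper offers no proof of this lemma to compare against: it is imported verbatim from \cite{OT82}, Lemma 2.1. Judged on its own, your argument is the standard one and is essentially sound. Reducing the comparison of $A$ and $A'$ to a single closed curve $L$ by closing up inside the unbounded component is the right move, and your bookkeeping checks out: the vertices of $L$ lie in $U \cup \{z\} \cup V_A \cup V_{A'} \cup V_B$ and hence off $T$, while $L$ itself avoids $V_T$. The Key Claim is correctly identified as a mod-$2$ intersection statement, and both routes you sketch for it work: the face $2$-coloring of $\mathbb{R}^2 \sm T$ exists precisely because $T$, being a closed curve, has even local degree everywhere, and alternatively one may decompose the even plane graph underlying $T$ into edge-disjoint simple closed curves and apply the Jordan curve theorem to each.

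One point deserves more care than you give it. Your color-switching argument counts \emph{edge crossings}, whereas the lemma counts \emph{points} of $T \cap A$; these agree only when every point of $L \cap T$ lies on a single local branch of $T$. The hypotheses $A \cap V_T = \0 = T \cap V_A$ keep $A$ away from the vertices of $T$, but not from a transversal self-intersection of $T$ at a non-vertex point $p$: an arc through such a $p$ contributes one point to $T \cap A$ while switching color twice, and your ``further generic perturbation'' to separate coinciding crossings then turns one intersection point into two, \emph{changing the parity} rather than preserving it as you assert. (With purely set-theoretic counting the statement would in fact fail for such a $T$: route one arc through $p$ and a second around it.) So you must either restrict to curves $T$ whose self-intersections occur only at vertices --- evidently the intended setting, in which case $A \cap V_T = \0$ forces every point of $A \cap T$ to be a simple transversal crossing of one edge of $T$ by one edge of $A$, your perturbations genuinely preserve the count, and the rest of the argument closes --- or else count intersections with multiplicity. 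You should also remark that finiteness of $T \cap A$ (no edge of $A$ overlapping an edge of $T$ in a segment) is implicit in the parity being defined at all. With those caveats made explicit, the proof is complete.
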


A component $U$ of $\mathbb R^2 \sm T$ is called \emph{odd} (respectively, \emph{even}) if each point of $U$ is odd (respectively, \emph{even}) with respect to $T$.  Clearly the unbounded complementary domain of $T$ is even.

A map $f: [0,1] \to \mathbb{R}^2$ is \emph{piecewise linear} if there are finitely many points $0 = t_1 < t_2 < \ldots < t_n = 1$ such that for each $i = 1,\ldots,n-1$, as $t$ runs from $t_i$ to $t_{i+1}$, $f(t)$ parameterizes the straight line segment $\overline{f(t_i)f(t_{i+1})}$.  If $f$ is a piecewise linear map, then clearly $f([0,1])$ is a polygonal arc.

\begin{defn}[$\e$-strip]
\label{defn:strip}
Suppose that $f,g: [0,1] \to \mathbb R^2$ are two piecewise linear maps into the plane such that $f([0,1]) \cap g([0,1]) = \0$ and for all $t \in [0,1]$, $\|f(t) - g(t)\| < \e$.  Let $B_t = \ol{f(t)g(t)}$, and let $T_t = B_0 \cup f([0,t]) \cup g([0,t] \cup B_t$.  We denote the union of all odd (respectively, even) complementary domains of $T_t$ by $S^-_t$ (respectively, $S^+_t$).  If $B_0 \cap B_1 = \0$, then we say that $S_1^-$ is an \emph{$\e$-strip with disjoint ends}.
\end{defn}

See Figure \ref{fig:strip} for an illustration of a simple $\e$-strip.

\begin{figure}
\begin{center}
\includegraphics[scale=0.8]{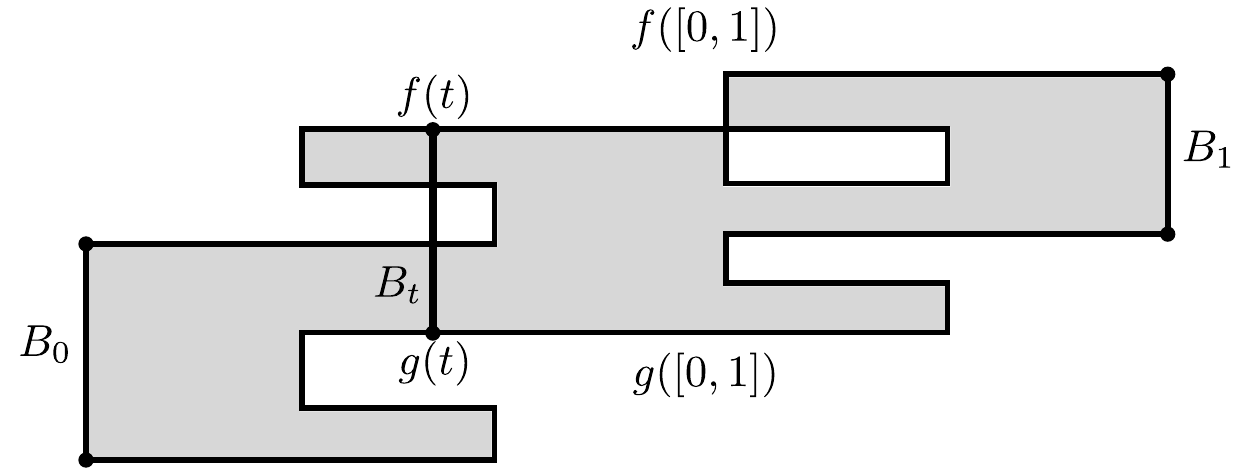}
\end{center}

\caption{An illustration of an $\e$-strip, with a generic bridge $B_t$ drawn.  This strip has one odd domain, which is shaded gray.}
\label{fig:strip}
\end{figure}

We say a continuum $X$ is contained in an $\e$-strip with disjoint ends if there exist such $f,g$ as in the above definition such that $X \subset S^-_1$.  Observe that in this situation, $X \cap B_0 = \0 = X \cap B_1$.

If $X$ is an indecomposable and hereditarily equivalent plane continuum, then it contains uncountably many pairwise disjoint copies of itself.  In particular it contains a copy of $X \times C$, where $C$ is the Cantor set \cite{vandouwen93}.  This is the key observation behind the following result.

\begin{lem}[\cite{OT84a}, Theorem 15]
\label{lem:strip}
Suppose that $X$ is a non-degenerate, indecomposable and hereditarily equivalent plane continuum.  Then there exists a non-degenerate subcontinuum $Y$ such that for each $\e > 0$, $Y$ is contained in an $\e$-strip with disjoint ends.
\end{lem}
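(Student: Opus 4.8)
The plan is to run the construction directly from the embedding $X \times \C \hookrightarrow X \subset \mathbb R^2$ supplied by the key observation preceding the statement. Writing $h \colon X \times \C \to \mathbb R^2$ for this embedding and $Z_c = h(X \times \{c\})$ for the slice over $c \in \C$, the sets $Z_c$ are pairwise disjoint subcontinua of $X$, each homeomorphic to $X$, and the assignment $c \mapsto Z_c$ is continuous from $\C$ into the hyperspace of compact subsets of $\mathbb R^2$ with the Hausdorff metric (since $X \times \{c\}$ varies continuously in the hyperspace of $X \times \C$ and $h$ is a homeomorphism). Fix a two-sided limit point $c_0 \in \C$ and set $Y = Z_{c_0}$; this will be the required subcontinuum. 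Everything then reduces to showing that, for each $\e > 0$, $Y$ can be enclosed in an $\e$-strip with disjoint ends.

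Given $\e > 0$, I would first use uniform continuity of $c \mapsto Z_c$ to pick flanking parameters $c_- < c_0 < c_+$ in $\C$ so that every slice $Z_c$ with $c \in [c_-,c_+] \cap \C$ lies within Hausdorff distance $\e/10$ of $Y$; in particular $Z_{c_-}$ and $Z_{c_+}$ are disjoint from $Y$ and hug it from ``either side.'' Because $Y$ is tree-like (by Cook's result that hereditarily equivalent continua are tree-like), it does not separate the plane, so it admits arbitrarily fine \emph{regular neighborhoods}: a polygonal region $N \supset Y$ with $\ol N \subset N_{\e/10}(Y)$ whose boundary $\partial N$ is a single polygonal closed curve. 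The goal is to choose $N$ so that $\partial N$ is in fact the boundary curve $T_1 = B_0 \cup f([0,1]) \cup g([0,1]) \cup B_1$ of an $\e$-strip in the sense of Definition \ref{defn:strip}; that is, so that $\partial N$ splits into two polygonal arcs $f,g$ which are pointwise within $\e$ of each other, joined by two short bridges, with $Y$ lying in the odd complementary domains. The flanking slices are what orient this splitting: near $Y$ one wants $f$ to run on the $Z_{c_-}$-side and $g$ on the $Z_{c_+}$-side, paired across $Y$ by nearest-point projection, and one then verifies the parity so that $Y \subset S^-_1$, arranging that the end bridges be taken disjoint (using that $c_0$ is a two-sided limit point to keep the two ``ends'' of $Y$ apart).

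The main obstacle is precisely the existence of this two-sided splitting of $\partial N$ into pointwise-close arcs. A genuinely branched regular neighborhood --- a thickened triod, say --- has a boundary curve that cannot be written as two pointwise-close sides, so a continuum carrying an essential triod at scale $\e$ simply cannot sit in an $\e$-strip. This is exactly where the hypotheses must enter through the disjoint copies: the uncountable family $\{Z_c\}$ of pairwise disjoint slices Hausdorff-converging to $Y$ should force a globally coherent transverse linear order along $Y$ --- two nearby disjoint copies of a planar continuum lying on opposite, consistently chosen sides --- and such coherence is incompatible with an essential branch point, where three prongs would demand three independent transverse directions that disjoint nearby copies cannot realize in the plane. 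I expect the heart of the argument to be this careful planar ``no interleaving triods'' analysis (in the spirit of the atriodicity and span arguments of Ingram and Cook, with Janiszewski's theorem controlling separation) and its conversion into the desired band structure; the explicit construction of $f$, $g$, the bridges, and the parity count should then be routine, with the caveat that one may need to pass to a suitable non-degenerate subcontinuum in order to localize the argument --- which is why the statement asserts only the existence of some such $Y$.
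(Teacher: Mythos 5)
The paper does not actually prove this lemma; it quotes it from \cite{OT84a} (Theorem 15), offering only the guiding observation that an indecomposable hereditarily equivalent plane continuum contains a copy of $X \times C$, with $C$ the Cantor set. Judged on its own merits, your attempt has a genuine gap: the entire content of the theorem is concentrated in the one step you explicitly defer. Choosing $Y = Z_{c_0}$ for a two-sided limit parameter and flanking it by disjoint slices $Z_{c_-}, Z_{c_+}$ Hausdorff-close to $Y$ is the right opening move, and it matches how the cited argument begins. But nothing you wrote produces the strip. The assertion that $Z_{c_-}$ and $Z_{c_+}$ ``hug $Y$ from either side'' has no content yet: two pairwise disjoint continua Hausdorff-close to $Y$ need not lie on opposite sides of $Y$ in any transverse sense, and since $Y$ is tree-like it does not separate the plane, so ``sides'' of $Y$ are not even defined a priori. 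Likewise, knowing that a regular neighborhood $N$ of $Y$ is a disk whose boundary is one polygonal closed curve is far from knowing that $\partial N$ splits into arcs $f,g$ with $\|f(t)-g(t)\| < \e$ for all $t$, joined by two disjoint short bridges, with $Y$ in the odd domains --- that splitting \emph{is} the statement that $Y$ lies in an $\e$-strip with disjoint ends, so asserting it is circular unless the ``coherent transverse order / no interleaving triods'' analysis is actually carried out. You correctly locate the difficulty there, but ``I expect the heart of the argument to be\ldots'' is a research plan, not a proof.

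To make the gap concrete: atriodicity of $Y$ is not by itself enough to place $Y$ in arbitrarily small strips; that property is closely tied to span-zero behaviour, and deducing it genuinely requires the uncountable family of pairwise disjoint copies used quantitatively --- roughly, one shows that if $Y$ failed to lie in small strips then two disjoint nearby copies would be forced to intersect, and this span-type contradiction is the bulk of the work in \cite{OT82} and \cite{OT84a}. Nor is the final assembly of $f$, $g$, the bridges, and the parity verification that $Y \subset S_1^-$ ``routine'': as noted in Section \ref{sec:strips}, $\e$-strips need not admit $1$-maps to arcs, so the band structure being built is combinatorially delicate. In short, your proposal is a plausible reconstruction of the skeleton of the cited proof, but the load-bearing step is missing.
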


\section{Separators}
\label{sec:separators}

In light of Lemma \ref{lem:strip} above, to prove Theorem \ref{thm:main1} it suffices to show that any hereditarily indecomposable continuum $X$ contained in arbitrarily small plane strips is homeomorphic to the pseudo-arc (Theorem \ref{thm:hered indec strip}).  Our strategy below is to consider a small strip containing the continuum $X$, and to approximate $X$ by a graph $G$ contained in that strip.  If we vary $t$ from $0$ to $1$, the bridge $B_t$ in the strip sweeps across the graph $G$.  As it does so, it may wander back and forth in $G$, in a pattern whose essential property is captured in the following result.  We will then use the crookedness of the hereditarily indecomposable continuum $X$ to match with that pattern (see Theorems \ref{thm:lift} and \ref{thm:charsep} below) to obtain an $\e$-map to an arc.

\begin{lem}
\label{lem:separate}
Suppose that a graph $G$ is contained in an $\e$-strip $S_1^-$ with disjoint ends.  Let
\[C = \{(x,t) \in G \times [0,1]: x \in B_t\} .\]
Then $C$ separates $G \times \{0\}$ from $G \times \{1\}$ in $G \times [0,1]$.
\end{lem}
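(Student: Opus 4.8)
The plan is to equip the complement of $C$ with an even/odd \emph{parity} function inherited from the strip, to show this function is locally constant, and to check that it takes the value ``even'' on $G \times \{0\}$ and ``odd'' on $G \times \{1\}$; the induced clopen partition of $(G \times [0,1]) \sm C$ is then precisely a separation witnessing the claim.

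First I would record the reduction that makes the parity function available. Since $G \subset S_1^-$ and $S_1^-$ is a union of complementary domains of $T_1$, we have $G \cap T_1 = \0$; in particular $G$ misses $B_0$, $B_1$, $f([0,1])$ and $g([0,1])$. As $T_t = B_0 \cup f([0,t]) \cup g([0,t]) \cup B_t$, the only portion of $T_t$ that a point $x \in G$ can meet is the bridge $B_t$, so for $(x,t) \in G \times [0,1]$ we have $x \in T_t$ exactly when $(x,t) \in C$. Hence $(G \times [0,1]) \sm C$ is contained in the open set $D = \{(x,t) \in \mathbb{R}^2 \times [0,1] : x \notin T_t\}$, on which I define $P(x,t) \in \{+,-\}$ according to whether $x$ lies in an even or an odd complementary domain of $T_t$ (well-defined by Lemma~2.1). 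Equivalently, writing $T_t$ as a polygonal loop $\gamma_t$ depending continuously on $t$, the value $P(x,t)$ is the mod~$2$ winding number of $\gamma_t$ about $x$, because a polygonal arc reaching the unbounded domain meets $T_t$ a number of times congruent mod~$2$ to that winding number.

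The heart of the argument is that $P$ is locally constant on $D$, and this is the step I expect to be the main obstacle. Because $f$, $g$ and $t \mapsto B_t$ vary continuously, $t \mapsto T_t$ is continuous in the Hausdorff metric, so $(x,t) \mapsto \mathrm{dist}(x,T_t)$ is continuous and $D$ is open, hence locally path connected. Given any path $s \mapsto (x_s,t_s)$ in $D$, the assignment $H(\theta,s) = \gamma_{t_s}(\theta) - x_s$ takes values in $\mathbb{R}^2 \sm \{0\}$ (it never vanishes, as $x_s \notin T_{t_s}$), so it is a homotopy showing that the loops $\gamma_{t_0} - x_0$ and $\gamma_{t_1} - x_1$ have equal winding number; thus $P$ is constant on path components of $D$ and therefore locally constant. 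The delicate point to get right here is that $T_t$ does not merely wobble but genuinely grows and sweeps as $t$ changes (the arcs $f([0,t])$, $g([0,t])$ lengthen and the bridge $B_t$ migrates), so a naive ``small perturbation'' estimate is not enough; the loop-homotopy computation is what handles this uniformly, leaving only the continuity of $t \mapsto \gamma_t$ and the identification of $P$ with the mod~$2$ winding number to be verified.

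Finally I would read off the boundary values and conclude. At $t = 0$ the curve $T_0 = B_0$ is a line segment, whose complement is a single (unbounded, even) domain, so $P(x,0) = +$ for every $x \in G$ and $G \times \{0\} \subseteq \{P = +\}$. At $t = 1$ we have $G \subset S_1^-$, the union of odd domains of $T_1$, so $P(x,1) = -$ and $G \times \{1\} \subseteq \{P = -\}$. Since $P$ is locally constant on $(G \times [0,1]) \sm C$, the sets $\{P = +\}$ and $\{P = -\}$ are disjoint, relatively open, and cover $(G \times [0,1]) \sm C$, with $G \times \{0\}$ in the first and $G \times \{1\}$ in the second. This is exactly the assertion that $C$ separates $G \times \{0\}$ from $G \times \{1\}$ in $G \times [0,1]$.
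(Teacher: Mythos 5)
Your proof is correct, and its overall strategy coincides with the paper's: both arguments track the even/odd parity of a point $x \in G$ with respect to the polygonal loop $T_t$, observe that it is even at $t=0$ (where $T_0 = B_0$ has only the unbounded complementary domain) and odd at $t=1$ (since $G \subset S_1^-$), and exploit the fact that, because $G \cap T_1 = \0$, the parity of $x$ can only change at parameters $t$ with $x \in B_t$. Where you differ is in how the key continuity step is certified. The paper packages the parity into a signed distance function $\varphi(x,t) = \pm d(x,B_t)$, cites Lemma~2.3 of \cite{OT82} for its continuity, and gets the separation from the intermediate value theorem; you instead prove directly that the parity is locally constant on the open set $\{(x,t): x \notin T_t\}$ by identifying it with the mod-$2$ winding number of a continuously varying parameterization $\gamma_t$ of $T_t$ and invoking homotopy invariance of the winding number. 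Your route is self-contained (modulo the standard equivalence between the crossing-number parity of Lemma~2.1 and the mod-$2$ winding number, which does deserve the one-line justification you sketched, and the joint continuity of $(\theta,t) \mapsto \gamma_t(\theta)$, which is routine to write down), whereas the paper's is shorter but leans on the earlier reference. Both yield exactly the clopen partition of $(G \times [0,1]) \sm C$ required for the separation, and your explicit observation that $x \in T_t$ iff $x \in B_t$ for $x \in G$ is the same fact the paper uses implicitly when it asserts that the zero set of $\varphi$ inside $G \times [0,1]$ is $C$.
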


\begin{proof}
Define the function $\varphi: G \times [0,1] \to \mathbb{R}$ by
\[ \varphi(x,t) = \begin{cases}
+d(x,B_t) & \textrm{if } x \in S^+_t \\
-d(x,B_t) & \textrm{if } x \in S^-_t \\
0 & \textrm{otherwise,}
\end{cases} \]
where $d(x,B_t) = \inf\{\|x - b\|: b \in B_t\}$.  Then $\varphi$ is  a continuous function (see the proof of Lemma 2.3 in \cite{OT82}).  Since $S^-_0 = \0 = B_0\cap G$, $\varphi(x,0) > 0$ for each $x \in G$.  Similarly, since $G \subset S^-_1$, $\varphi(x,1) < 0$ for each $x \in G$.  Hence the set of points $C$ where $\varphi(x,t) = 0$ must separate $G \times \{0\}$ from $G \times \{1\}$ in $G \times [0,1]$.
\end{proof}

In \cite[Theorem 20]{HO14}, the authors gave a characterization of hereditarily indecomposable continua in terms of sets as in Lemma \ref{lem:separate} which separate $G \times \{0\}$ from $G \times \{1\}$ in the product $G \times [0,1]$ of a graph $G$ with $[0,1]$.  Here we give a simplified version of that theorem, which is more broadly applicable.

\begin{thm}
\label{thm:lift}
A continuum $X$ is hereditarily indecomposable if and only if for any map $f: X \to G$ to a graph $G$, and for any open set $U \subseteq G \times (0,1)$ which separates $G \times \{0\}$ from $G \times \{1\}$ in $G \times [0,1]$, there exists a map $h: X \to U$ such that $f = \pi_1 \circ h$ (where $\pi_1: G \times [0,1] \to G$ is the first coordinate projection).
\end{thm}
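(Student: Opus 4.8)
The plan is to prove both directions of this characterization of hereditary indecomposability. For the forward direction, I would assume $X$ is hereditarily indecomposable and be given a map $f \colon X \to G$ together with an open separating set $U \subseteq G \times (0,1)$. The goal is to produce a lift $h \colon X \to U$ with $\pi_1 \circ h = f$; equivalently, I must find a continuous choice of second coordinate $t(x) \in (0,1)$ so that $(f(x), t(x)) \in U$ for every $x$. The natural strategy is to exploit the defining ``crookedness'' property of hereditarily indecomposable continua: for any two disjoint closed sets, the continuum can be covered by a chain in which one cannot pass between the two sets without doubling back. I expect the main step to be translating the separation hypothesis on $U$ into a finite combinatorial datum. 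Since $U$ separates $G \times \{0\}$ from $G \times \{1\}$, its complement contains a closed separator, and I would first approximate the situation by passing to a fine subdivision of $G$ and a partition of $[0,1]$, so that over each small piece of $G$ the separating set $U$ has a predictable structure (a ``window'' through which the lift must pass at a controlled range of heights).

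The key steps, in order, would be: (1) use the separation of $G \times \{0\}$ from $G \times \{1\}$ by the open set $U$ to extract, for each point or small cell of $G$, an interval of admissible heights, and record how these admissible intervals shift as one moves across adjacent cells of $G$ — this is where the back-and-forth ``pattern'' alluded to in the surrounding text becomes a finite combinatorial object; (2) pull this pattern back along $f$ to obtain, on $X$, a corresponding collection of closed sets where the lift is constrained to go ``high'' versus ``low''; (3) invoke hereditary indecomposability, in the form that $X$ admits arbitrarily fine crooked chain covers compatible with $f$, to build the height function $t \colon X \to (0,1)$ one refinement at a time, ensuring at each stage that $(f(x), t(x))$ stays inside $U$; and (4) pass to a uniform limit of these approximate lifts to obtain the continuous $h$. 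The crookedness is exactly what allows the height function to reverse direction without forcing $(f(x),t(x))$ to leave $U$, since a monotone choice could be obstructed by the way $U$ weaves between the two ends.

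For the converse direction, I would argue the contrapositive: suppose $X$ is \emph{not} hereditarily indecomposable, so some subcontinuum $X'$ decomposes as $X' = A \cup B$ with $A, B$ proper subcontinua. I would then construct a specific map $f \colon X \to G$ (for a suitable simple graph $G$, likely an arc) together with a separating open set $U$ for which no lift $h$ can exist. The idea is that a decomposition provides a ``non-crooked'' pattern: one can design $U$ so that the required height assignment is forced to be monotone along the image of the decomposition, and the disjointness of the two pieces under $f$ makes a continuous single-valued lift into $U$ impossible. The failure of the lifting property then witnesses the failure of hereditary indecomposability.

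The main obstacle I anticipate is step (3) of the forward direction: organizing the inductive construction of the lift so that the crookedness hypothesis is applied correctly at each scale and the approximations converge uniformly while remaining inside the open set $U$. Controlling that the limit function still maps into $U$ (an open, hence non-closed, target) rather than merely into its closure requires care, and is precisely where the quantitative strength of hereditary indecomposability must be matched against the geometry of the separator. I would expect the cleanest route to be a reduction to the already-established Theorem~20 of \cite{HO14}, showing that the present, more general separation hypothesis can be brought into the form handled there after a suitable approximation and subdivision, thereby inheriting its lifting conclusion.
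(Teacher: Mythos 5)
Your plan misses the one idea that makes this theorem quick, and as a result your steps (1)--(4) amount to re-proving \cite[Theorem 20]{HO14} from scratch. The content of the present theorem relative to that cited result is solely the upgrade from approximate commutation ($d(f(x),\pi_1\circ h(x))<\varepsilon$ for a prescribed $\varepsilon$) to exact commutation ($f=\pi_1\circ h$). The paper gets this in a few lines: take a closed separator $M$ inside the open set $U$, choose $\varepsilon>0$ so that the $\varepsilon$-neighborhood $U_1$ of $M$ (in the product metric) still lies in $U$, apply \cite[Theorem 20]{HO14} to the $\frac{\varepsilon}{2}$-neighborhood $U_2$ to get $h'\colon X\to U_2$ with $d(f(x),\pi_1\circ h'(x))<\frac{\varepsilon}{2}$, and then \emph{snap the first coordinate back}: define $h(x)=(f(x),\pi_2\circ h'(x))$. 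The triangle inequality shows $h(x)\in U_1\subseteq U$, and commutation is now exact by construction. Your proposal never identifies this replacement-of-first-coordinate step; your inductive crooked-chain construction of a height function $t(x)$ is aimed at exact commutation directly, but that is precisely the hard route you yourself flag as the main obstacle (convergence inside the open set $U$), and it is unnecessary given the cited theorem. Your closing sentence does point at the right reduction, but without the margin-around-a-closed-separator device and the snap-back definition of $h$, the reduction is not actually carried out.

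Your converse is also a genuinely harder route than needed, and probably should not be attempted: the paper does not construct any obstruction from a decomposable subcontinuum. It simply observes that the exact-lifting condition of this theorem formally implies the approximate-lifting condition of \cite[Theorem 20]{HO14} (take $M\subseteq U$ and note $d(f(x),\pi_1\circ h(x))=0<\varepsilon$), and the converse direction of that cited theorem then yields hereditary indecomposability. Building a specific graph $G$ and separator $U$ that defeats lifting for a decomposable continuum, as you propose, would essentially reprove that direction of \cite[Theorem 20]{HO14}; it is real work, it is not sketched in enough detail here to be checkable, and it is entirely avoidable.
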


\begin{proof}
According to \cite[Theorem 20]{HO14}, a continuum $X$ is hereditarily indecomposable if and only if for any map $f: X \to G$ to a graph $G$ with metric $d$, for any set $M \subseteq G \times (0,1)$ which separates $G \times \{0\}$ from $G \times \{1\}$ in $G \times [0,1]$, for any open set $U \subseteq G \times [0,1]$ with $M \subseteq U$, and for any $\e > 0$, there exists a map $h: X \to U$ such that $d(f(x), \pi_1 \circ h(x)) < \e$ for all $x \in X$.  The condition in the present theorem is clearly stronger than this condition from \cite[Theorem 20]{HO14}.  Therefore, to prove the present theorem we need only consider the forward implication.

Suppose $X$ is hereditarily indecomposable, let $f: X \to G$ be a map to a graph $G$ with metric $d$, and let $U \subset G \times (0,1)$ be an open set which separates $G \times \{0\}$ from $G \times \{1\}$ in $G \times [0,1]$.  It is well-known (see e.g.\ \cite[Theorem \S 46.VII.3]{kuratowski68}) that there exists a closed set $M \subset U$ which also separates $G \times \{0\}$ from $G \times \{1\}$ in $G \times [0,1]$.  Let $\e > 0$ be small enough so that the open set
\begin{align*}
U_1 = \{(g,t) \in G \times [0,1]: & \textrm{ there exists } (g',t') \in M \textrm{ such that } \\
& d(g,g') < \e \textrm{ and } |t-t'| < \e\}
\end{align*}
is contained in $U$.  Let
\begin{align*}
U_2 = \{ (g,t) \in G \times [0,1]: & \textrm{ there exists } (g',t') \in M \textrm{ such that } \\
& d(g,g') < \tfrac{\e}{2} \textrm{ and } |t-t'| < \e\} ,
\end{align*}
and apply \cite[Theorem 20]{HO14} to obtain a map $h': X \to U_2$ such that $d(f(x), \pi_1 \circ h'(x)) < \frac{\e}{2}$ for all $x \in X$.

Define $h: X \to U$ by $h(x) = (f(x), \pi_2 \circ h'(x))$, where $\pi_2: G \times [0,1] \to [0,1]$ is the second coordinate projection.  Clearly this function $h$ is continuous, and $f = \pi_1 \circ h$.  To see that the range of $h$ is really contained in $U$, let $x \in X$, and denote $h'(x) = (g,t)$, so that $h(x) = (f(x),t)$.  Because $h'(x) \in U_2$, there exists $(g',t') \in M$ such that $d(g,g') < \frac{\e}{2}$ and $|t-t'| < \e$.  Moreover, by choice of $h'$ we have $d(f(x),g) < \frac{\e}{2}$.  So by the triangle inequality, we have $d(f(x),g') < \e$, which means $h(x) \in U_1 \subseteq U$, as desired.
\end{proof}

By Bing's \cite{bing51} result a hereditarily indecomposable continuum is homeomorphic to the pseudo-arc if and only if it is arc-like.  A new characterization of the pseudo-arc, involving the notion of \emph{span zero} (see \cite{lelek64}), was obtained in \cite{HO14}.  It states that a hereditarily indecomposable continuum is a pseudo-arc if and only if it has span zero.  The more technical characterization of the pseudo-arc in Theorem \ref{thm:charsep} below is useful in cases when (like in the case of hereditarily equivalent plane continua) it is not a priori known that $X$ has span zero.

In the statement below we assume that all spaces (i.e., $X$, $G$, and $I$) are contained in Euclidean space $\mathbb{R}^3$.  One could just as well use the Hilbert cube $[0,1]^{\mathbb{N}}$, depending on the intended application.

\begin{thm}
\label{thm:charsep}
Suppose that $X \subset \mathbb{R}^3$ is a hereditarily indecomposable continuum.  Then the following are equivalent:
\begin{enumerate}
\item $X$ is homeomorphic to the pseudo-arc;
\item For each $\e > 0$ there exist a graph $G \subset \mathbb{R}^3$, a map $f: X \to G$ with $\|(x - f(x)\| < \e$ for each $x \in X$, and an arc $I \subset \mathbb{R}^3$ with endpoints $a$ and $b$, such that the set
\[ U = \{(x,t) \in G \times (I \sm \{a,b\}): \|x - t\| < \e\} \]
separates $G \times \{a\}$ from $G \times\{b\}$ in $G \times I$.
\end{enumerate}
\end{thm}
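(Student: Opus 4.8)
The plan is to prove the two implications separately, using throughout Bing's theorem \cite{bing51} that a hereditarily indecomposable continuum is homeomorphic to the pseudo-arc if and only if it is arc-like. Thus in each direction it suffices to relate condition (2) to arc-likeness of $X$.

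For the implication $(2) \Rightarrow (1)$, I would show that condition (2) forces $X$ to be arc-like. Fix $\e > 0$ and take the graph $G$, the map $f \colon X \to G$ with $\|x - f(x)\| < \e$, and the arc $I$ with endpoints $a,b$ provided by (2). Identifying $I$ with $[0,1]$ via a homeomorphism sending $a,b$ to $0,1$, the open set $U$ separates $G \times \{a\}$ from $G \times \{b\}$, so since $X$ is hereditarily indecomposable, Theorem \ref{thm:lift} applies and yields a map $h \colon X \to U$ with $f = \pi_1 \circ h$. Writing $\psi = \pi_2 \circ h \colon X \to I \sm \{a,b\}$, the containment $h(x) \in U$ gives $\|f(x) - \psi(x)\| < \e$, and together with $\|x - f(x)\| < \e$ this yields $\|x - \psi(x)\| < 2\e$ for every $x$. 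Hence $\psi(x) = \psi(x')$ implies $\|x - x'\| < 4\e$, so $\psi$ is a $4\e$-map from $X$ to the arc $I$. As $\e > 0$ is arbitrary, $X$ is arc-like, and therefore a pseudo-arc.

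For the implication $(1) \Rightarrow (2)$, I would use arc-likeness of the pseudo-arc to manufacture the required data. Fix $\e > 0$; by arc-likeness cover $X$ by the links of a chain of open sets each of diameter $< \delta$ (with $\delta$ to be determined) and choose a point $x_i \in X$ in each consecutive overlap $C_i \cap C_{i+1}$. A generic small perturbation of the $x_i$ in $\mathbb{R}^3$ makes the polygonal arc $G$ through $x_0, \ldots, x_n$ embedded (this is where the ambient dimension $3$ is essential, as non-adjacent segments are generically disjoint), while a partition-of-unity map into the nerve gives $f \colon X \to G$ with $\|x - f(x)\| < \e$ once $\delta$ is small enough. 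I would then simply take $I = G$ itself, with $a,b$ its endpoints, so that $U = \{(x,t) \in G \times (G \sm \{a,b\}) : \|x - t\| < \e\}$ contains a full neighbourhood of the diagonal of $G \times G$.

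Finally, to verify the separation I would parametrize $G$ by an embedding $\gamma \colon [0,1] \to \mathbb{R}^3$ with $\gamma(0) = a$ and $\gamma(1) = b$, and use coordinates $(s,u)$ on $G \times I$ via $(x,t) = (\gamma(s), \gamma(u))$. The naive diagonal $\{(\gamma(s),\gamma(s))\}$ lies in $U$, but its endpoints are precisely the excluded corners $(a,a)$ and $(b,b)$, so it is not a legitimate separator; to avoid them I would instead use the slanted cut $\Gamma = \{(\gamma(s), \gamma(\beta(s))) : s \in [0,1]\}$ with $\beta(s) = (1 - 2\eta)s + \eta$. For $\eta > 0$ small, uniform continuity of $\gamma$ gives $\|\gamma(s) - \gamma(\beta(s))\| < \e$ while $\beta(s) \in (0,1)$, so $\Gamma \subseteq U$; and since $\Gamma$ is the graph of a map $[0,1] \to (0,1)$, the disjoint open sets $\{u < \beta(s)\}$ and $\{u > \beta(s)\}$ contain $G \times \{a\}$ and $G \times \{b\}$ respectively. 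Thus $\Gamma$, and hence $U$, separates $G \times \{a\}$ from $G \times \{b\}$. I expect the main obstacle to be exactly this last point: the coincidence of the natural (diagonal) separator with the excluded endpoints $a,b$, which is what the slanted cut $\beta$ is designed to circumvent.
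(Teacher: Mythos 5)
Your proof of $(2) \Rightarrow (1)$ is essentially the paper's: both apply Theorem \ref{thm:lift} to lift $f$ to a map $h$ into $U$ and then check that $\pi_2 \circ h$ is a small map to the arc $I$ (the paper budgets with $\e/4$ to get an $\e$-map where you get a $4\e$-map, which is immaterial). Your proof of $(1) \Rightarrow (2)$, however, takes a genuinely different and more elementary route. The paper invokes the fact that an arc-like continuum has span zero \cite{lelek64} and then cites Theorem 4 of \cite{HO14}, which asserts that for \emph{every} graph $G$ and \emph{every} arc $I$ within a suitable Hausdorff distance of $X$, the $\e$-neighbourhood $U$ separates $G \times \{a\}$ from $G \times \{b\}$; condition (2) then follows by picking any such $G$, $f$, and $I$. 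You instead exploit the purely existential form of (2): you take $I = G$ to be the approximating arc itself and verify the separation by hand, exhibiting the slanted diagonal $\Gamma = \{(\gamma(s),\gamma(\beta(s)))\}$ as an explicit separator inside $U$ (the shift $\beta$ correctly dodges the excluded corners $(a,a)$ and $(b,b)$, and a set containing a separator is a separator since $G\times\{a\}$ and $G\times\{b\}$ are disjoint from $U$). This is correct and self-contained, and it makes transparent that the real content of the theorem lies in $(2) \Rightarrow (1)$. What the paper's span-zero route buys is the stronger, universally quantified separation statement — needed implicitly in the intended applications where $I$ is handed to you (e.g.\ by a strip) rather than chosen — but for the literal statement of condition (2) your single witness suffices.
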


\begin{proof}
Suppose $X$ is homeomorphic to the pseudo-arc, and fix $\e > 0$.  Note $X$ is arc-like and, hence \cite{lelek64}, $X$ has span zero.  Therefore, according to Theorem 4 of \cite{HO14}, there exists $\delta > 0$ such that for any graph $G \subset \mathbb{R}^3$ and arc $I \subset \mathbb{R}^3$ both within Hausdroff distance $\delta$ from $X$, the set $U = \{(x,t) \in G \times I: \|x - y\| < \e\}$ separates $G \times \{a\}$ from $G \times \{b\}$ in $G \times I$, where $a,b$ are the endpoints of $I$.  We may assume that $\delta < \e$.  Since $X$ is arc-like, we may choose an arc $G \subset \mathbb{R}^3$ within Hausdorff distance $\delta$ of $X$ and a map $f: X \to G$ such that $\|x - f(x)\| < \e$ for all $x \in X$.  Choose any arc $I$ within Hausdorff distance $\delta$ from $X$.  Then $G$, $f$, and $I$ satisfy the conditions of statement (2), as desired.

Conversely, suppose statement (2) holds.  To prove that $X$ is homeomorphic to the pseudo-arc, by \cite{bing51} it suffices to show that for each $\e > 0$ there exists an $\e$-map from $X$ to an arc.  Fix $\e > 0$.  Suppose that $G \subset \mathbb{R}^3$ is a graph, $f: X \to G$ is a map such that $\|x - f(x)\| < \frac{\e}{4}$ for all $x \in X$, $I \subset \mathbb{R}^3$ is an arc with endpoints $a$ and $b$, and
\[ U = \left\{ (x,t) \in G \times (I \sm \{a,b\}): \|x - t\| < \frac{\e}{4} \right\} \]
separates $G \times \{a\}$ from $G \times \{b\}$ in $G \times I$.  Denote by $\pi_1: G \times I \to G$ the first coordinate projection and by $\pi_2: G \times I \to I$ the second coordinate projection.  By Theorem \ref{thm:lift} there exists a map $h: X \to U$ such that $f = \pi_1 \circ h$.  We claim that $\pi_2 \circ h(x): X \to I$ is an $\e$-map.
To see this suppose that $\pi_2 \circ h(x_1) = \pi_2 \circ h(x_2)$.  Then
\begin{align*}
\|x_1 - x_2\| &\leq \|x_1 - f(x_1)\| + \|\pi_1 \circ h(x_1) - \pi_2 \circ h(x_1)\| + {} \\
& \qquad + \|\pi_2 \circ h(x_2) - \pi_1 \circ h(x_2)\| + \|f(x_2) - x_2\| < \e .
\end{align*}
\end{proof}

\section{Proof of main result}
\label{sec:pseudo-arc}

We now apply the results established above to prove the following key theorem.

\begin{thm}
\label{thm:hered indec strip}
Let $X \subset \mathbb{R}^2$ be a hereditarily indecomposable plane continuum such that for each $\e > 0$, there is an $\e$-strip with disjoint ends containing $X$.  Then $X$ is homeomorphic to the pseudo-arc.
\end{thm}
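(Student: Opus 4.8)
The plan is to verify the two hypotheses of the characterization in Theorem~\ref{thm:charsep}(2) for our continuum $X$, using the $\e$-strips to produce the required graphs, maps, and separators. Since $X$ is assumed hereditarily indecomposable, establishing condition (2) will immediately give that $X$ is homeomorphic to the pseudo-arc. The central difficulty is translating the planar separation statement of Lemma~\ref{lem:separate}, which lives in $G \times [0,1]$ with the parameter interval of the strip, into the abstract separation statement of Theorem~\ref{thm:charsep}(2), which requires an arc $I \subset \mathbb{R}^3$ embedded near $X$ together with a metric-ball separator $U = \{(x,t): \|x-t\| < \e\}$.

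First I would fix $\e > 0$ and invoke the hypothesis to obtain an $\e'$-strip with disjoint ends containing $X$, for a suitably small $\e' > 0$ to be chosen. Since $X$ is a plane continuum and hence tree-like (in fact arc-like, as it sits in arbitrarily thin strips), I would approximate $X$ by a graph $G \subset \mathbb{R}^2$ lying inside the strip, together with a map $f: X \to G$ with $\|x - f(x)\|$ small; the natural candidate for $G$ is a fine polygonal approximation to $X$ traced through the strip, and $f$ can be taken as a nearest-point or nerve-type retraction. By Lemma~\ref{lem:separate}, the set $C = \{(x,t) \in G \times [0,1]: x \in B_t\}$ separates $G \times \{0\}$ from $G \times \{1\}$ in $G \times [0,1]$, where $B_t$ is the bridge of the strip at parameter $t$.

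The heart of the argument is to build the arc $I \subset \mathbb{R}^3$ and to arrange that proximity in $\mathbb{R}^3$ between a point $x \in G$ and a point $t \in I$ corresponds exactly to the incidence relation $x \in B_t$ governing $C$. The idea is to encode the parameter $t$ of the strip geometrically: I would realize $I$ as an arc whose point at parameter $t$ is placed so that it is within distance $\e$ (in $\mathbb{R}^3$) of precisely those $x \in G$ that lie on or near the bridge $B_t$. Concretely, one can lift into the third coordinate, situating $I$ as a curve over the strip so that the metric ball of radius $\e$ about the point $I(t)$ meets $G$ in a neighborhood of $B_t \cap G$; then the separator $U$ of Theorem~\ref{thm:charsep}(2) contains (a neighborhood of) the separating set $C$, and since $C$ already separates, so does $U$. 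The separation of $G \times \{a\}$ from $G \times \{b\}$ in $G \times I$ then follows by transporting the separation from Lemma~\ref{lem:separate} across the homeomorphism $[0,1] \to I$ and checking that the endpoint conditions at $a,b$ (corresponding to $t=0,1$, where $B_0, B_1$ miss $G$) are respected.

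The step I expect to be the main obstacle is the geometric construction of $I$ and the verification that the $\mathbb{R}^3$-metric separator $U$ genuinely separates: the bridges $B_t$ may sweep back and forth across $G$ in a complicated pattern, so a point $x \in G$ can be close to $I(t)$ for several disjoint parameter intervals, and one must ensure that the ball-of-radius-$\e$ separator does not inadvertently create a connection from $G \times \{a\}$ to $G \times \{b\}$ that bypasses the image of $C$. Controlling this requires choosing $\e'$ (the strip width) small relative to $\e$, and embedding $I$ with enough vertical separation in the third coordinate so that distinct bridge-crossings are resolved into distinct portions of $U$; the delicate point is that $U$ must be large enough to contain a separator yet constrained enough that its complement still links the two ends. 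Once $U$ is shown to separate $G \times \{a\}$ from $G \times \{b\}$, condition (2) of Theorem~\ref{thm:charsep} holds for arbitrary $\e$, and the conclusion follows.
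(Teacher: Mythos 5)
Your outline follows the same route as the paper: approximate $X$ by a graph $G$ inside a thin strip, invoke Lemma~\ref{lem:separate} to get the separator $C = \{(x,t) : x \in B_t\}$, and then realize condition (2) of Theorem~\ref{thm:charsep} by finding an arc $I \subset \mathbb{R}^3$ for which the metric-ball set $U$ contains (the image of) $C$. But the step you correctly identify as the heart of the argument --- the construction of $I$ --- is left unresolved, and your diagnosis of what makes it delicate is off in a way that matters. You worry that a point of $G$ may be close to $I(t)$ for several disjoint parameter intervals, and that $U$ must be ``constrained enough that its complement still links the two ends,'' proposing to fix this by giving $I$ enough vertical separation to resolve distinct bridge-crossings. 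This is a red herring: only the one-sided containment $C \subseteq U$ is needed. Any set containing a separator of $G \times \{a\}$ from $G \times \{b\}$ and still disjoint from $G \times \{a,b\}$ is itself a separator, since its complement is smaller; extra intersections of $\e$-balls about $I(t)$ with far-away parts of $G$ are harmless. Worse, introducing substantial vertical displacement to ``resolve'' crossings would tend to destroy the containment $C \subseteq U$ that you actually need.

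The missing concrete idea is that $I$ should be taken to be (essentially) one of the two boundary curves of the strip itself. In the paper's proof one starts from an $\frac{\e}{2}$-strip given by $f,g : [0,1] \to \mathbb{R}^2$ with $\|f(t) - g(t)\| < \frac{\e}{2}$, and perturbs $f$ by less than $\frac{\e}{2}$ into the third coordinate \emph{only} to make it injective, obtaining an arc $I = f'([0,1])$. Then $x \in B_t = \ol{f(t)g(t)}$ immediately gives $\|x - f(t)\| < \frac{\e}{2}$, hence $\|x - f'(t)\| < \e$, so the image of $C$ under $\id \times f'$ lies in $U' = \{(x,y) \in G \times f'((0,1)) : \|x - y\| < \e\}$, and $U'$ separates because $C$ does and $U'$ misses the two ends (the strip has disjoint ends, so $B_0$ and $B_1$ miss $G$). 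No further geometric control is required. As written, your proposal would send you chasing a constraint that does not exist while leaving the actual construction of $I$, and hence the verification of Theorem~\ref{thm:charsep}(2), incomplete.
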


\begin{proof}
Let $\e > 0$, and consider an $\frac{\e}{2}$-strip with disjoint ends containing $X$.  That is, consider piecewise linear maps $f,g: [0,1] \to \mathbb{R}^2$ such that $f([0,1]) \cap g([0,1]) = \0$, $\|f(t) - g(t)\| < \frac{\e}{2}$ for each $t \in [0,1]$, and $X \subset S^-_1$.  Identify $\mathbb{R}^2$ with $\mathbb{R}^2 \times \{0\} \subset \mathbb{R}^3$, and adjust $f$ slightly to obtain a map $f': [0,1] \to \mathbb{R}^3$ which is one-to-one (so that $f'([0,1])$ is an arc) and $\|f(t) - f'(t)\| < \frac{\e}{2}$ for all $t \in [0,1]$.

Clearly $X$ is $1$-dimensional, so there exists a graph $G \subset S_1^-$ and a map $h: X \to G$ such that $\|x - h(x)\| < \e$ for all $x \in X$.  By Lemma \ref{lem:separate}, the set
\[ C = \{(x,t) \in G \times [0,1]: x \in B_t\} \]
separates $G \times \{0\}$ from $G \times \{1\}$ in $G \times [0,1]$.  Clearly this set $C$ is contained in
\[ U = \left\{ (x,t) \in G \times (0,1): \|x - f(t)\| < \frac{\e}{2} \right\} ,\]
and the image of this set $U$ under the homeomorphism $\id \times f': G \times [0,1] \to G \times f'([0,1])$ is contained in
\[ U' = \{(x,y) \in G \times f'((0,1)): \|x - y\| < \e\} .\]
Therefore $U'$ separates separates $G \times \{f'(0)\}$ from $G \times \{f'(1)\}$ in $G \times f'([0,1])$.  Hence, by Theorem \ref{thm:charsep}, $X$ is homeomorphic to the pseudo-arc.
\end{proof}

We are now ready to prove our main result, Theorem \ref{thm:main1}.

\begin{proof}[Proof of Theorem \ref{thm:main1}]
Let $X$ be a non-degenerate hereditarily equivalent plane continuum.  If $X$ is decomposable, then $X$ is an arc by \cite{henderson60}.  Suppose then that $X$ is indecomposable, and hence hereditarily indecomposable.  By Lemma \ref{lem:strip}, we may assume that $X$ is embedded in the plane so that for each $\e > 0$, $X$ is contained in an $\e$-strip with disjoint ends.  It then follows from Theorem \ref{thm:hered indec strip} that $X$ is homeomorphic to the pseudo-arc.
\end{proof}

\bibliographystyle{amsalpha}
\bibliography{her-equivalent}

\end{document}